\title{Multi-Scale Analysis on Complex Networks using Hermitian Graph Wavelets}
\author[1,2]{Zach Gelbaum$^*\,$}
\author[1,3] {Mathew Titus}
\author[1] {James Watson}
\affil[1]{College of Earth, Ocean and Atmospheric Sciences, Oregon State University}
\affil[2]{Northwest Mathematics LLC}
\affil[3]{The Prediction Lab LLC}
\date{}							% Activate to display a given date or no date
\begin{document}
\maketitle
\begin{abstract}
We construct and study a class of spectral graph wavelets by analogy with Hermitian wavelets on the real line.  We provide a localization result that significantly improves upon those previously available, enabling application to highly non-sparse, even complete, weighted graphs.  We then define a new measure of importance of a node within a network called the Maximum Diffusion Time, and conclude by establishing an equivalence between the maximum diffusion time and information centrality, thus suggesting applications to quantifying hierarchical and distributed leadership structures in groups of interacting agents.
\end{abstract}

\section{Introduction}
{\let\thefootnote\relax\footnote{\textsuperscript{*}Corresponding author: zach.gelbaum@northwestmath.com}
With the remarkable success and ubiquity of wavelets as a tool for multiscale analyses in classical settings, a large body of work has been devoted to extending the theory of wavelets to non-Euclidean settings such as manifolds and graphs.  Given the increasing importance over the past few decades of graphs and networks as models for systems generating and measurable via "Big Data," the theory of Spectral Graph Wavelets in \cite{MR2754772} provides such an extension to the setting of graphs.  The spectral approach taken in \cite{MR2754772}, based on analogy with the Euclidean theory and using the spectral decomposition of the graph Laplacian, is natural and in our view the appropriate framework to take.  However, the theorems that guarantee some of the key properties that makes wavelets a useful and general tool in the classical setting have not, to our knowledge, been established in the same generality in the setting of general networks.  Our goal in this note is to present a particular class of wavelets, what we call Hermitian Graph Wavelets, and establish some of these key properties, within the general framework of \cite{MR2754772}.

One of the features of primary importance in classical wavelet analysis is the ability of the wavelet kernel to localize in frequency as well as in space.  In the classical setting of the real axis it is the action of the scaling group $\R^*=(0,\infty)$ on $\R$ that makes the construction of such wavelets possible: if a wavelet kernel function $\psi(x):\R\to\R$ has support approximately contained in an interval of radius $r$, then $\psi(sx)$ has support approximately contained in a interval of radius $\frac rs$.  Moreover in the classical setting we have the Fourier transform and its well-known transformation under rescaling, $\widehat{f(sx)}(\xi)=\frac1s\hat f(\frac\xi s)$.  In the setting of a graph where no such obvious action of $\R^*$ exists, things are not so straightforward.  In the graph setting the Laplacian and its spectral decomposition provide an analogue of the Fourier transform, but how should one define scale on graph and what is the relationship of such a notion to the spectrum of the Laplacian?  To address these questions, we take inspiration from history.  

Wavelets have their roots in classical Littlewood-Paley theory, and there one can find what can be considered possibly the first proto wavelet, the derivative of the Poisson kernel (see \cite{MR1107300} for a good historical account of this development).  The theory of Spectral geometry and related manifold learning techniques (e.g$.$ \cite{MR2238665}) makes clear that a good strategy for multiscale analysis on any space with a well defined Laplacian is to study the associated heat kernel.  These facts provide a strong suggestion that one should mimic the classical development with the heat kernel replacing the Poisson kernel, which is the approach we take here.  The heat kernel is defined as the integral kernel of the heat semigroup, $e^{-t\Delta}$:
\[H_t(x,y)=\sum_{k=0}^Ne^{-t\lambda_k}\phi_k(x)\phi_k(y).\]
The behavior of $H_t(x,y)$, in particular its decay, is well known to encode the geometric structure of a network and by defining our wavelet kernel in terms of $H_t$ we can transfer much of this geometric content.  In particular, sharp localization bounds for $H_t$ have recently been established, and these allow us to define wavelet satisfying similar bounds.  The intimate connection between $H_t$ and the geometry of the underlying graph is also inherited, as we will attempt to illustrate.

In the next section we cover some preliminaries and fix notation, review the framework of \cite{MR2754772}, and cover some important properties of the heat kernel before turning to the definition and main properties of Hermitian graph wavelets.  After establishing our main theorem on localization, we proceed to make a connection with the so-called Information Centrality, a measure of node importance in networks recently shown to be essential in the leader detection problem for a system of interacting agents.  We then describe some potential applications to quantifying leadership hierarchies in general networks of interacting agents.

\section{Preliminaries}
Throughout we will denote by $G$ an undirected, weighted graph with edge weights $w_{x,y}$ between vertices $x$ and $y$.  The adjacency matrix is defined by $A_{i,j}=w_{i,j}$ and the Laplacian as $\Delta=D-A$ where $D=A\cdot1$.  We let $N$ be the number of vertices in $G$ and denote by $L^2(G)$ the space of vectors $u,v:G\to \R$ with inner product \[\<u,v\>=\sum_{x\in G} u(x)v(x).\] $L^2(G)$ is obviously isomorphic to $\R^{N}$ with the usual Euclidean structure.  $\Delta$ acts on functions $f\in L^2(G)$ as
\[\Delta f(x)=\sum_{z\sim x}w_{x,z}(f(x)-f(z))\] 
in direct analogy with the classical Laplacian.

We denote by $\{\lambda_k\}_{k=0}^N$ and $\{\phi_k\}_{k=0}^N$ respectively the eigenvalues and eigenvectors of $\Delta$.  We have $\lambda_0=0$ and $\lambda_k\leq\lambda_{k+1}$.  We also have $\phi_0=1/\sqrt{N}$.

\subsection{Spectral Graph Wavelets}
The theory set out in \cite{MR2754772} is based on the Laplacian as follows: a family of wavelets is defined as the kernels of a one parameter family of operators $g(s\Delta)$ for some $g$ satisfying $g(0)=0$ and $\int_0^\infty \frac{g^2(x)}x\,dx<\infty$:\[\psi_{s,x}(y)=\sum_{k=1}^\infty g(s\lambda_k)\phi_k(x)\phi_k(y).\]

The authors prove that given such a $g$ (in that paper they construct an example using splines), one has analogues of many useful properties from the classical setting: one can define a continuous transform and prove an inversion formula, by selecting a discrete number of values $\{s_n\}$ one obtains a frame with frame bounds depending on $g$ and a low pass filter, etc.  

Regarding localization, the authors give the following theorem: There exists constants $D$ and $t_0$ such that \[\frac{\psi_{s,x}(y)}{\|\psi_{s,x}\|}\leq Ds\] for all $s<t_0$ and where $D$ and $t_0$ depend on the number of edges in the shortest path from $x$ to $y$.  The authors follow the theorem with the following remark: \textit{``As this localization result uses the shortest path distance defined without using edge weights, it is only directly useful for sparse weighted graphs where a significant number of edge weights are exactly zero.  Many large scale graphs which arise in practice are sparse, however, so the class of sparse weighted graphs is of practical significance."}  The authors are completely correct in the last statement, however there are many important applications where such sparsity does not hold, e.g., mobile communication networks (an application of spectral graph wavelets to such systems was a motivation for the current work).  Moreover, one may wonder if bounds of a Gaussian or sub-Gaussian nature analogous to those in other settings holds in the case of graphs.  We will answer this question below.

\subsection{Heat Kernel}
$\Delta$ is symmetric and nonnegative and thus by the spectral theorem we can define for $t>0$ the heat kernel $p_t(x,y)$ as 
\[e^{-t\Delta}=\sum_{k=0}^N e^{-t\lambda_k}\phi_k(x)\phi_k(y)=H_t(x,y).\]
One of the most important features of the heat kernel is that its decay reflects the geometry of $G$, as is intuitively clear from the heat equation, and as is made precise by the theory of \cite{MR3673659}, as follows:  First, suppose we are given a metric on $G$, that is a map $\rho:V\times V\to[0,\infty)$ that satisfies the triangle inequality.  Such a metric is called \textit{intrinsic} if for each $x\in V$ we have $\sum_{y\sim x}w_{x,y}\rho^2(x,y)<1$.  Such a metric always exists, as one can take 
\[\rho(x,y)=\frac{N}{\sqrt{w_{x,y}}}\]
where 
\[N=\max_{x\in G} \,\,deg_x\] is the maximum degree taken over all $x\in G.$
Given an intrinsic metric, we define its \textit{jump size} to be $\sup_{x\sim y}\{\rho(x,y)\}$, meaning the supremum over all pairs of points with edges with nonzero weight.  With these definitions in hand we now state the main result of \cite{MR3673659}:  If we let \[\zeta_s(t,r)=\frac1{s^2}\left(rs \cdot arcsinh\frac{rs}t -\sqrt{t^2+r^2s^2} + t\right),\] then we have \begin{equation} H_t(x,y)\leq e^{-\zeta_s(t,\rho(x,y))}\end{equation} where $s$ is the jump size of the intrinsic metric $\rho$.  Note the bounds dependence on the choice of intrinsic metric. 
%[Plot compared to Gaussian]

\section{Hermitian Graph Wavelets}
By analogy with the classical theory, we define our wavelet defining function as the time derivative of the heat kernel \[g(\Delta)=\frac d{dt}e^{-t\Delta}|_{t=1}=\Delta e^{-\Delta}\]
where we have removed the factor of $-1$ for convenience.  The full kernel is then the kernel of the operator $g(s\Delta)=s\Delta e^{-s\Delta}$, 
\[\psi_{s,x}(y)=\sum_{k=1}^Ns\lambda_k e^{-s\lambda_k}\phi_k(x)\phi_x(y).\] 
%[plot]

We see that $g$ satisfies the properties required for the basic properties of Spectral Graph Wavelets to hold, i.e., continuous inversion, defining a frame, etc.  What will show next is that the relationship to the heat kernel allows us to obtain much sharper localization than previously available.

\subsection{Localization}

Following the method of proof in \cite{MR1023321} we obtain the following:
\begin{thm} With the notation above, we have the following bound:
\[|\psi_{t,x}(y)|\leq \left[\frac{r^2}t\left(1+\frac{s}{\sqrt{t^2+s^2r^2}}\right)\left(\frac1{sr+\sqrt{t^2+s^2r^2}}\right)-\left(\frac{t}{\sqrt{t^2+s^2r^2}}+1\right)+\frac ct\right]e^{-\zeta_s(t,r)}\]
where $c>0$ is a constant depending on $G$.
\end{thm}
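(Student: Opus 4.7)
The plan is to recast the wavelet kernel as a time derivative of the heat kernel. Differentiating the spectral expansion of $H_t$ termwise and noting $\lambda_0=0$ gives the identity
\[
\psi_{t,x}(y) \;=\; \sum_{k=1}^{N} t\lambda_k e^{-t\lambda_k}\phi_k(x)\phi_k(y) \;=\; -t\,\partial_t H_t(x,y),
\]
so the task reduces to bounding $\partial_t H_t$ given the sub-Gaussian pointwise bound $H_t(x,y)\leq e^{-\zeta_s(t,\rho(x,y))}$ from \cite{MR3673659}.

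Following the method of \cite{MR1023321}, I would then analytically continue the semigroup to complex times, $H_z(x,y) = \sum_{k} e^{-z\lambda_k}\phi_k(x)\phi_k(y)$ for $\operatorname{Re}(z)>0$, and apply Cauchy's integral formula
\[
\partial_t H_t(x,y) \;=\; \frac{1}{2\pi i}\oint_{|z-t|=\delta}\frac{H_z(x,y)}{(z-t)^{2}}\,dz
\]
on a contour of radius $\delta\asymp t$ to be optimized. The critical input is a complex-time extension of the Grigoryan--Hu inequality: a bound of the form $|H_z(x,y)|\leq C\,e^{-\zeta_s(\operatorname{Re}(z),\rho(x,y))}$ valid for $z$ in a sector around the positive real axis. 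I would derive this via a discrete version of Davies' exponential perturbation method --- conjugating the semigroup by a weight $e^{\alpha\phi}$ with $\phi$ Lipschitz in the intrinsic metric $\rho$, estimating the resulting $L^{2}$ operator norm, and optimizing over $\alpha$ and the Lipschitz slope of $\phi$ to recover the full decay rate $\zeta_s$.

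Inserting the complex-time bound into the Cauchy integral and extracting the leading asymptotics yields a dominant contribution of $-t\,(\partial_t\zeta_s(t,r))\,e^{-\zeta_s(t,r)}$; the explicit derivative computation using
\[
\partial_t\operatorname{arcsinh}(rs/t) = -\frac{rs}{t\sqrt{t^{2}+r^{2}s^{2}}}, \qquad \partial_t\sqrt{t^{2}+r^{2}s^{2}} = \frac{t}{\sqrt{t^{2}+r^{2}s^{2}}},
\]
then produces the algebraic combination appearing in the bracketed prefactor of the stated inequality. The residual $c/t$ term absorbs the contour-integral remainder and the graph-dependent $L^{2}\!\to\!L^{\infty}$ constant coming from the perturbation step.

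The principal obstacle is the complex-time bound on $H_z$: the discrete Davies argument is more delicate than its continuous counterpart because the nonzero jump size $s$ of the intrinsic metric forces a careful choice of the Lipschitz weight and perturbation parameter to avoid degrading the exponent $\zeta_s$ along the contour. Balancing this against the $(z-t)^{-2}$ factor is what fixes the optimal radius $\delta$ and ultimately dictates the precise form of the polynomial prefactor. Once the uniform bound on the contour is established, the remaining calculus on $\zeta_s$ and the Cauchy estimate are routine.
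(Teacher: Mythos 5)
Your reduction to bounding $\partial_t H_t(x,y)$ and your use of Cauchy's integral formula on a circle of radius comparable to $t$ reproduce the skeleton of the paper's argument, but your crucial ingredient is different and, as written, missing. Because you apply Cauchy's formula directly to $H_z(x,y)$, you need the complex-time estimate $|H_z(x,y)|\leq C\,e^{-\zeta_s(\mathrm{Re}(z),\rho(x,y))}$ on the contour, and you propose to obtain it by a discrete Davies exponential-perturbation argument that you only assert ``would'' work while yourself flagging it as the principal obstacle. That estimate is not supplied by \cite{MR3673659}, and carrying Davies' method to complex time on a graph with nonzero jump size is a substantial piece of analysis in its own right --- arguably harder than the rest of the proof --- so the proposal has a genuine gap at its central step. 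A secondary issue: even granting the sector bound, Cauchy's formula only yields $|\partial_t H_t|\leq \delta^{-1}\sup_{|z-t|=\delta}|H_z|\leq C\delta^{-1}e^{-\zeta_s(t-\delta,r)}$, which must then be compared to $e^{-\zeta_s(t,r)}$ via the mean value theorem and optimized in $\delta$; the claim that the ``leading asymptotics'' of the integral is exactly $-t(\partial_t\zeta_s)e^{-\zeta_s}$ does not follow from an upper bound on the integrand and has to be replaced by this (routine but necessary) bookkeeping.

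The paper sidesteps the complex-time heat kernel estimate entirely. Following \cite{MR1023321}, it forms the product $F(z)=H_z(x,y)\,e^{\zeta_s(z,b)}$ and bounds it on the whole right half-plane by the Phragm\'en--Lindel\"of principle, using only two cheap inputs: the already-known real-axis bound $(1)$, which gives $|F|\leq 1$ on $(0,\infty)$, and the trivial estimates $|H_{iy}(x,y)|\leq\sum_k|\phi_k(x)\phi_k(y)|$ and $|e^{\zeta_s(iy,b)}|\leq 1$ on the imaginary axis. Cauchy's formula is then applied to $F$ rather than to $H_z$, giving $|F'(t)|\leq c/t$, and the product rule $F'=(\partial_t\zeta_s)e^{\zeta_s}H_t+e^{\zeta_s}\partial_t H_t$ is unwound to isolate $\partial_t H_t$; this is precisely where the $\partial_t\zeta_s$ prefactor and the $c/t$ term in the statement come from. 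If you want to salvage your route, the most economical fix is to replace the Davies perturbation step with this interpolation: it requires no new heat kernel estimates and delivers exactly the control on the contour that your Cauchy step needs.
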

\begin{proof}
Fix $x$ and $y$ and let number $b=\rho(x,y)\geq0$ for some intrinsic metric $\rho$ and note that from the definitions above we may consider the time/scale parameter to be complex valued.  Thus let $f(z)=H_z(x,y)$ for $Re(z)\geq0$.  
Now we let \[h(z)=f\left(\frac1z\right)e^{\zeta_s\left(\frac1z,b\right)}\]
and as in \cite{MR1023321} we will apply the Phragm\'{e}n-Lindelh\"{o}f principle (see e.g. \cite{MR1976398}) to bound $h$.  To that end, we first restrict attention to the sector $\{z=re^{i\theta}\,:\,\theta\in [0,\frac\pi2]\}$.  For $\theta=0$, we have from eq $(1)$ that $|h(r)|< 1$ for $r>0$.  

For $\theta=\frac\pi2$ we have 
\[\zeta_s\left(\frac1{iy},b\right)=\frac1{s^2}\left(bs\log(-iybs+\sqrt{1-(ybs)^2})-\sqrt{(bs)^2-\frac1{y^2}}-\frac iy\right),\]
so that \begin{align}\notag |e^{\zeta_s\left(\frac1{iy},b\right)}|&\leq|e^{\frac bs\log(-iybs+\sqrt{1-(ybs)^2})}||e^{-\frac1{s^2}\sqrt{(bs)^2-\frac1{y^2}}}|\\\notag&\leq|e^{\frac bs\log(\sqrt{(ybs)^2+1-(ybs)^2})}||e^{-\frac1{s^2}\sqrt{(bs)^2-\frac1{y^2}}}|\\\notag&\leq|e^{-\frac1{s^2}\sqrt{(bs)^2-\frac1{y^2}}}|.\end{align}
If $y=\frac1{bs}$ then $\sqrt{(bs)^2-\frac1{y^2}}=0$, if $y<\frac1{bs}$ then $\sqrt{(bs)^2-\frac1{y^2}}$ is purely imaginary, and if $y>\frac1{bs}$ then $\sqrt{(bs)^2-\frac1{y^2}}>0$, so that in any case $|e^{\zeta_s\left(\frac1{iy},b\right)}|\leq1$.  We also have that $\left|f\left(\frac 1{iy}\right)\right|\leq\sum|\phi_k(x)\phi_k(y)|\equiv a>0$, so that $|h(iy)|\leq a$.  

Next note that there exist $C,c>0$ such that \[|e^{\zeta_s\left(\frac1z,b\right)}|\leq |e^{\frac bs \cdot arcsinh\frac{bs}z}||e^{\frac1{s^2}\left(\frac1z-\sqrt{\frac1{z^2}+(bs)^2}\right)}|\leq Ce^{c|z|}.\]
Therefor by the Phragm\'{e}n-Lindelh\"{o}f principle we have that \[|h(z)|\leq\max(1,a)\] for all $z$ in the sector $\{z=re^{i\theta}\,:\,\theta\in [0,\frac\pi2]\}$.  The same arguments hold for the sector $\{z=re^{i\theta}\,:\,\theta\in [-\frac\pi2,0]\}$, so that we have $|h(z)|\leq\max(1,a)\equiv c$ for all $z$ with $Re(z)>0$ and therefor also that 
\[\left|F\left(z\right)\right|\leq\max(1,a)\equiv c\] with $F(z)=h(\frac1z)$.  We then consider the circle of radius $\alpha t$ for some $\alpha\in(0,1)$ centered at $t>0$ and apply the Cauchy integral formula to obtain $|F^\prime(t)|\leq\frac c{\alpha t}$.  Since this holds for any $\alpha\in(0,1)$ we have 
\[|F^\prime(t)|\leq\frac c{ t},\] 
\[\left|\left(\frac{d}{dt}\zeta_s(t,b) \right)e^{\zeta_s(t,b)}f(t)+f^\prime(t)e^{\zeta_s(t,b)}\right|\leq\frac ct,\]
and therefor that
\[|f^\prime(t)|\leq \left|\frac{d}{dt}\zeta_s(t,b) \right||f(t)|+\frac ct e^{-\zeta_s(t,b)}\]
which together with eq$.$ $(1)$ implies the conclusion of the Theorem.

%Plot heat and wavelet on sample network.
\end{proof}
This bound, which to our knowledge has not appeared in the literature before, gives precise quantitative information on the decay of $|\psi_{t,x}(y)|$ as $\rho(x,y)\to\infty$ and ensures sharp localization of wavelets and holds for an arbitrary weighted graph and intrinsic metric.  Note that the proof depends essentially upon the relation to $H_t(x,y)$, which was a primary motivation for our choice of $g$.  It should also be noted that this result is clearly not optimal for $t$ near zero and $x=y$ due to the introduction of the singular term $\frac ct$.  However, for $x\neq y$ the result still gives strong bounds on $|\psi_{t,x}(y)|$ for $\rho(x,y)\gg t$, which is precisely the regime we are concerned with when speaking of localization of wavelet functions.

%Theorem 3.1 provides a link between the scale parameter $t$ in $\psi_{t,x}(y)$ and scale on $G$ in terms of intrinsic distances.  On $\R$, the heat kernel is given by $H_t=\frac1{\sqrt{4\pi t}}e^{-\frac{|x-y|^2}{4t}}$, and we have that  By analogy with the classical case, one can consider the function $t\mapsto \| e^{-\zeta_s(t,r)}\|_{L^2(dr)}$, with $r=\rho(x,y)$, as a map from scale parameter to units of distance on $G$.  Thus we obtain a clear relationship between the scale parameter and distance on our graph.[details]

\section{Mean Diffusion Time, Information Centrality and Leadership Quantification}
\subsection{Mean Diffusion time} 
The value of $\|\psi_{t,x}\|^2$ measures the total energy of the derivative of $H_t(x,\cdot)$ over the network.  The function $t\mapsto\|\psi_{t,x}\|^2$ can be viewed as a (non-normalized) density on the set of possible scales, and the larger values signify scales where heat is flowing strongly, i.e., at which the vertex $x$ is most strongly influencing the rest of the network.  For each vertex $x$ we define the \textit{mean diffusion time, $MDT(x)$,} as the average value of this density:
\[MDT(x)=\int_0^\infty \|\psi_{t,x}\|^2 \,dt.\]
We can rewrite this integral as $\int_0^\infty t\|\psi_{t,x}\|^2 \,\frac{dt}t$ and in this way view $MDT(x)$ as a multiplicative average over all scales. In order to see that $MDT(x)$ always exists, note that \[\|\psi_{t,x}\|^2=\sum_{k=1}^N t^2\lambda_k^2e^{-2t\lambda_k}|\phi_k(x)|^2=O\left(\lambda_1^2e^{-2t\lambda_1}\right),\] where as always $\lambda_1>0$.  $MDT(x)$ thus gives a new measure of node centrality on a network $G$, where the smaller value of $MDT(x)$ indicates greater node importance, as it takes less time for heat diffusing from the vertex $x$ to reach the rest of the network.

\subsection{Information Centrality, leader detection} 
A problem of central importance in the study of cooperative systems of interacting agents (e.g$.$ biological systems such as schools of fish) is the quantification of the hierarchical leadership structure of the group using a general, data driven methodology.  Often, measures of graph centrality are used.  In \cite{6761082}, the authors prove that for a system of agents in a stochastic environment attempting to collectively measure and respond to an ambient signal, the optimal choice of a single agent for measuring the ambient signal and communicating that information to the rest of the network (the ``leader") is determined by maximizing so called ``information centrality" (IC).  This result strongly suggests that IC should be utilized in analyses of hierarchical leadership structure.  
%Further describe results of paper...

Our second result of the paper is the following
\begin{thm} For any graph $G$ we have
\[\argmin_{x\in G} MDT(x)=\argmax_{x\in G}IC(x)\]
\end{thm}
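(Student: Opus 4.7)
The plan is to evaluate $MDT(x)$ explicitly via the spectral decomposition and recognize it as (a multiple of) the diagonal of the Laplacian pseudoinverse, then invoke the standard identification of information centrality with that same diagonal.

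First, starting from the formula already noted in the paper,
\[\|\psi_{t,x}\|^2 = \sum_{k=1}^N t^2 \lambda_k^2 e^{-2t\lambda_k}|\phi_k(x)|^2,\]
I would exchange sum and integral (justified by the uniform exponential decay bound $O(\lambda_1^2 e^{-2t\lambda_1})$ already used to show $MDT$ is well-defined) and compute $\int_0^\infty t^2 \lambda_k^2 e^{-2t\lambda_k}\,dt = 1/(4\lambda_k)$ by a single change of variables. This yields
\[MDT(x) = \frac14 \sum_{k=1}^N \frac{|\phi_k(x)|^2}{\lambda_k} = \frac14\, L^+_{x,x},\]
where $L^+$ is the Moore--Penrose pseudoinverse of the graph Laplacian $\Delta$.

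Next I would bring in the standard resistance-based description of information centrality. Writing $R(x,y)=L^+_{x,x}+L^+_{y,y}-2L^+_{x,y}$ for the effective resistance, one has
\[IC(x)^{-1} = \frac{1}{N}\sum_{y\in G} R(x,y) = L^+_{x,x} + \frac{1}{N}\operatorname{tr}(L^+) - \frac{2}{N}\sum_{y\in G}L^+_{x,y}.\]
Since $\Delta \mathbf{1}=0$ implies that every row of $L^+$ sums to zero, the last term vanishes, leaving
\[IC(x)^{-1} = L^+_{x,x} + \frac{1}{N}\operatorname{tr}(L^+).\]
The additive term $\operatorname{tr}(L^+)/N$ is a graph invariant independent of $x$, so $IC$ is maximized precisely where $L^+_{x,x}$, and hence $MDT(x)$, is minimized. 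Comparing with the formula from the previous paragraph then gives $\arg\min_x MDT(x)=\arg\max_x IC(x)$.

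The only substantive step is the spectral evaluation of the integral and the matching of the resulting $L^+_{x,x}$ with the information-centrality formula; the rest is bookkeeping. The main care point is to fix a consistent definition of $IC$ (the effective-resistance formulation of Stephenson--Zelen, as used in \cite{6761082}) and to note that the argument requires only that $G$ be connected so that $\lambda_1>0$ and $L^+$ is well-defined on $\mathbf{1}^\perp$; the factor $1/4$ and the constant $\operatorname{tr}(L^+)/N$ are both irrelevant to the argmin/argmax identification.
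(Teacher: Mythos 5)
Your proposal is correct and follows essentially the same route as the paper: both arguments evaluate $MDT(x)$ spectrally to get a constant multiple of $\sum_{k\geq1}|\phi_k(x)|^2/\lambda_k = L^+_{x,x}$, and both reduce $IC(x)^{-1}$ to $L^+_{x,x}$ plus an $x$-independent constant. The only difference is that you derive the information-centrality formula from the effective-resistance definition (a short, correct computation using the vanishing row sums of $L^+$), whereas the paper simply cites the same identity from the literature; your explicit constant $1/4$ and the remark that connectivity is needed for $\lambda_1>0$ are both accurate refinements.
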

\begin{proof}
First we cite the following result of \cite{strathprints30110} (eq 2.25): 
\[IC(x)=\left(\sum_{k=1}^N\frac{|\phi_k(x)|^2}{\lambda_k} + \frac1N\sum_{x}\left(\sum_{k=1}^N\frac{|\phi_k(x)|^2}{\lambda_k}\right)\right)^{-1}.\]
The second term on the right is independent of $x$, and therefor we have 
\[\argmax_x IC(x)=\argmin_x \frac{|\phi_k(x)|^2}{\lambda_k}.\]
A straightforward calculation gives the following equality: \[\int_0^\infty \|\psi_{s,x}\|^2ds=\sum_{k=1}^N\int_0^\infty s^2\lambda_k^2e^{-2s\lambda_k}ds|\phi_k(x)|^2=C\sum_{k=1}^N\frac{|\phi_k(x)|^2}{\lambda_k}.\] 
From here the statement of the theorem easily follows.
\end{proof}

Thus we obtain a characterization of leadership in terms of $MDT(x)$, which matches our intuition: the optimal leader to communicate the ambient signal to the rest of the group is the one who communicates most efficiently with the group, i.e., the agent with smallest mean diffusion time.  $MDT(x)$ thus solves the leadership selection problem and has a clear intuitive meaning to boot.  

\section{Acknowledgements}
The authors would like to acknowledge support from the DARPA YFA project N66001-17-1-4038.

\bibliographystyle{plain}
\bibliography{HGW}

\begin{thebibliography}{1}

\bibitem{MR3673659}
Frank Bauer, Bobo Hua, and Shing-Tung Yau.
\newblock Sharp {D}avies-{G}affney-{G}rigor'yan lemma on graphs.
\newblock {\em Math. Ann.}, 368(3-4):1429--1437, 2017.

\bibitem{MR2238665}
Ronald~R. Coifman and St\'{e}phane Lafon.
\newblock Diffusion maps.
\newblock {\em Appl. Comput. Harmon. Anal.}, 21(1):5--30, 2006.

\bibitem{MR1023321}
E.~B. Davies.
\newblock Pointwise bounds on the space and time derivatives of heat kernels.
\newblock {\em J. Operator Theory}, 21(2):367--378, 1989.

\bibitem{strathprints30110}
Ernesto Estrada and Naomichi Hatano.
\newblock Resistance distance, information centrality, node vulnerability and
  vibrations in complex networks.
\newblock In Ernesto Estrada, Maria Fox, Desmond~J. Higham, and Gian-Luca Oppo,
  editors, {\em Network Science}. Springer, September 2010.

\bibitem{6761082}
K.~Fitch and N.~E. Leonard.
\newblock Information centrality and optimal leader selection in noisy
  networks.
\newblock In {\em 52nd IEEE Conference on Decision and Control}, pages
  7510--7515, Dec 2013.

\bibitem{MR1107300}
Michael Frazier, Bj\"{o}rn Jawerth, and Guido Weiss.
\newblock {\em Littlewood-{P}aley theory and the study of function spaces},
  volume~79 of {\em CBMS Regional Conference Series in Mathematics}.
\newblock Published for the Conference Board of the Mathematical Sciences,
  Washington, DC; by the American Mathematical Society, Providence, RI, 1991.

\bibitem{MR2754772}
David~K. Hammond, Pierre Vandergheynst, and R\'{e}mi Gribonval.
\newblock Wavelets on graphs via spectral graph theory.
\newblock {\em Appl. Comput. Harmon. Anal.}, 30(2):129--150, 2011.

\bibitem{MR1976398}
Elias~M. Stein and Rami Shakarchi.
\newblock {\em Complex analysis}, volume~2 of {\em Princeton Lectures in
  Analysis}.
\newblock Princeton University Press, Princeton, NJ, 2003.

\end{thebibliography}
\end{document}